\theoremstyle{plain}
\newtheorem{theorem}{Theorem}[section]
\theoremstyle{definition}
\theoremstyle{remark}
\newtheorem{remark}[theorem]{Remark}
\title[Cubic surfaces as Pfaffians]{Cubic surfaces as Pfaffians}
\author{David Oscari}
\address{FaMAFyC and CIEM, Universidad Nacional de C\'ordoba, C\'ordoba, Argentina}
\email{oscari@famaf.unc.edu.ar}
\begin{document}

\keywords{Pfaffian representation, Pfaffian representation explicit, Pfaffian representation algorithm, Pfaffian representation cubic surface}
 
\maketitle

\begin{abstract}
We prove that every cubic surface in $\mathbb{C}[x,y,z,t]$ is Pfaffian. A constructive proof is given.
\end{abstract}

\section{Introduction}
 
 Let $f(x,y,z,t)$ be a homogeneous polynomial of degree  three, with coefficients in $\mathbb{C}$:
\newcommand{\coeff}{\Theta} 
\begin{align}
f(x,y,z,t) & =\coeff_1\,x^3+\coeff_2\,y^3+\coeff_3\,z^3+\coeff_4\,t^3+ \nonumber  \\
  & \hspace{8.65ex}
 +\coeff_5\,x^2y+\coeff_6\,xy^2
 +\coeff_7\,x^2z+\coeff_8\,xz^2
 +\coeff_9\,x^2t+\coeff_{10}\,xt^2
   \nonumber \\
%%%%%%%%%%%%%%%%%%%%%%%%%%%%%%%%%%%%%%%%%%%%%%%%%%%%%%%%%%%%%%%%% 
   & \hspace{8.65ex} 
 +\coeff_{11}\,y^2z+\coeff_{12}\,yz^2
 +\coeff_{13}\,y^2t+\coeff_{14}\,yt^2
 +\coeff_{15}\,z^2t+\coeff_{16}\,zt^2
\label{eq:1}    \\
 %%%%%%%%%%%%%%%%%%%%%%%%%%%%%%%%%%%%%%%%%%%%%%%%%%%%%%%%%%%%%%%%% 
   & \hspace{39.38ex}
 +\coeff_{17}\,xyz+\coeff_{18}\,xyt+\coeff_{19}\,xzt+\coeff_{20}\,yzt\,.  \nonumber
 %%%%%%%%%%%%%%%%%%%%%%%%%%%%%%%%%%%%%%%%%%%%%%%%%%%%%%%%%%%%%%%%% 
\end{align}
It is said that $f$ is {\it Pfaffian} if there exists a matrix 
$$
M=xA_0+yA_1+zA_2+tA_3\,,
$$
where $A_0,A_1,A_2,A_3$ are $6\times6$ skew-symmetric matrices, with entries in $\mathbb{C}$, such that
$$
\det\,M=f(x,y,z,t)^2.    
$$
If the sign of $M$ is chosen appropriately, then  $M$ is called a {\it Pfaffian representation} of $f$. 

Although it is known that every cubic surface is Pfaffian (\cite{B}, \cite{FM}), few algorithms are known whose input is a cubic surface and whose output is a {\it explicit} Pfaffian representation. In \cite{H} and \cite{T} algorithms are provided that produce such representations, from a cubic surface in $\mathbb{K}[x,y,z,t]$, where $\mathbb{K}$ is a field. 
 
In this note we give a constructive proof of the following theorem.
\begin{theorem}\label{main}
Every cubic surface in $\mathbb{C}[x,y,z,t]$ is Pfaffian.
\end{theorem}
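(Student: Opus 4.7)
The strategy is to exploit the classical fact that every cubic surface in $\mathbb{P}^3$ over $\mathbb{C}$ contains a line, use a linear change of coordinates to place such a line in standard position, and then assemble a Pfaffian matrix from the resulting matrix factorization of $f$.

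\emph{Step 1: Normalize so $V(f)$ contains a coordinate line.} An explicit line $L\subset V(f)$ can be produced by parametrizing a generic line in $\mathbb{P}^3$ via six Pl\"ucker coordinates subject to the Pl\"ucker relation, substituting into $f$, and solving the resulting low-degree polynomial system; such a line always exists ($27$ for smooth $f$, at least one in any case by upper semi-continuity of Fano schemes). After an element of $\mathrm{GL}_4(\mathbb{C})$, I may assume $L=\{x=y=0\}$, so that
\begin{equation*}
    f(x,y,z,t) \;=\; x\,g(x,y,z,t) \;+\; y\,h(x,y,z,t)
\end{equation*}
for some quadratic forms $g,h$. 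This normalization is harmless, since Pfaffian representability is $\mathrm{GL}_4(\mathbb{C})$-equivariant: a representation $M$ in the new coordinates pulls back by conjugation with a constant $6\times 6$ matrix to a representation in the original coordinates.

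\emph{Step 2: From $(g,h)$ to a $6\times 6$ Pfaffian.} The decomposition yields the size-$2$ matrix factorization
\begin{equation*}
    \begin{pmatrix} g & -y \\ h & x \end{pmatrix}\begin{pmatrix} x & y \\ -h & g \end{pmatrix} \;=\; f\cdot I_2,
\end{equation*}
which however has quadratic entries. To linearize, I would realize each quadric $g$ and $h$ as a $4\times 4$ Pfaffian of a skew-symmetric matrix with linear entries (always possible for a quadric in four variables) and then assemble these, together with the outer $x,y$ block, into a single $6\times 6$ skew-symmetric matrix $M=xA_0+yA_1+zA_2+tA_3$. The Buchsbaum--Eisenbud structure theorem, applied to the Gorenstein ideal $(x,y)\bmod f$ of codimension $3$, guarantees the abstract existence of such an $M$ with $\Pf(M)=\pm f$; the constructive content of the theorem is to write the $A_i$ down in terms of the coefficients of $g$ and $h$. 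Step 3 is the direct verification that $\Pf(M)=f$ via explicit expansion (e.g., along the first row), with the overall sign fixed at the end.

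The main obstacle is Step 2: orchestrating the placements and signs of the linear forms so that the $4\times 4$ Pfaffian blocks representing $g$ and $h$, together with the cross terms produced by the $x,y$ entries of the outer $2\times 2$ block, combine to yield exactly $f$ rather than merely $f$ up to a scalar or a square factor. I expect the resulting closed-form expressions for the $A_i$ to be intricate but verifiable by a direct, if tedious, expansion of $\Pf(M)$. Highly degenerate cases---reducible cubics, or surfaces containing a whole family of lines---may require separate attention, but should reduce to the main case by specialization, since the locus of Pfaffian cubics is the image of a projective morphism and hence Zariski-closed in the space of all cubics.
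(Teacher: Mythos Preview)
Your overall strategy differs from the paper's. The paper does not look for a line on $V(f)$; instead it restricts to the hyperplane $y=0$, invokes the Kogan--Moreno~Maza classification to bring the ternary cubic $f(x,0,z,t)$ to one of a short list of canonical forms by an explicit matrix in $\mathrm{GL}(3,\mathbb{C})$ acting on $(x,z,t)$ only, and then simply \emph{exhibits} four concrete $6\times6$ skew matrices $B_0,\dots,B_3$---with entries built from the remaining coefficients $\Lambda_i$ and a single square root---such that $xB_0+yB_1+zB_2+tB_3$ is a Pfaffian representation of the transformed $f$. That claim is a direct symbolic verification; the six reducible canonical forms of $f(x,0,z,t)$ are then dismissed case by case. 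So the paper's argument is essentially a lookup table keyed on a plane section, not a structural construction from a line.

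Your Step~2, by contrast, has a genuine gap. The Buchsbaum--Eisenbud appeal is misapplied: the ideal $(x,y)$ has height~$2$ in $\mathbb{C}[x,y,z,t]$ and height~$1$ in $\mathbb{C}[x,y,z,t]/(f)$ (indeed $f\in(x,y)$ once the line is $\{x=y=0\}$), so in neither ring is it a codimension-$3$ Gorenstein ideal to which the structure theorem applies. And the proposed assembly---realize $g$ and $h$ \emph{separately} as $4\times4$ linear Pfaffians and splice them with an outer $x,y$ block---cannot work as written: two independent $4\times4$ skew blocks already occupy eight rows, and in any $6\times6$ skew matrix the Pfaffian expansion couples all fifteen upper-triangular entries, so the cross-terms are not governed by $g$ and $h$ individually. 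Your $2\times2$ factorization is correct, but it is the matrix factorization attached to the rank-$1$ MCM module coming from the line, whereas a Pfaffian representation corresponds, via the dictionary in \cite{B}, to a rank-$2$ ACM bundle; the constructive routes in \cite{T} and \cite{H} accordingly start from richer geometric data (five general points determining a twisted cubic, or a configuration of planes), not from a single line. The Zariski-closure remark at the end would salvage bare existence for degenerate $f$, but only after a representation for generic $f$ has actually been produced, which Step~2 as it stands does not accomplish.
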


When $y=0$, we obtain a ternary cubic:
\begin{multline}\label{cubica ternaria}
f(x,0,z,t) =\coeff_1\,x^3+\coeff_3\,z^3+\coeff_4\,t^3
                +\coeff_7\,x^2z+\coeff_8\,xz^2+\coeff_9\,x^2t+\coeff_{10}\,xt^2
                +\coeff_{15}\,z^2t+\coeff_{16}\,zt^2+\coeff_{19}\,xzt\,. 
\end{multline}

In \cite{KM}, I. Kogan and M. Moreno Maza provided computationally efficient algorithm that determines, up to a linear change of variables, the canonical form of an arbitrary ternary
cubic, and {\it explicitly} computes a corresponding matrix of change.

By Theorem 5 of ibid., there exists  a linear change of variables, say 
$
A_0=\left[
\begin{smallmatrix}
a_{11} & a_{12} & a_{13} \\
a_{21} & a_{22} & a_{23} \\
a_{31} & a_{32} & a_{33} 
\end{smallmatrix}
\right]\in \textsf{GL}(3,\mathbb{C})\,,
$
such that $f(x,0,z,t)$ can be transformed by $A_0$ to one  of the following canonical forms:
\begin{enumerate}[label=(\Roman*)]
    \item\label{irreducible} Irreducible (ternary) cubic
    \begin{align*} 
           x^3+\alpha xz^2+z^3-t^2z, && x^3+xz^2-t^2z, &&  x^3+ z^3-t^2z, && x^3-t^2z, && x^3+x^2z-t^2z,
    \end{align*}
    where $\alpha\not=0$ and $\alpha^3\not=27/4$.
    \item\label{reducible} Reducible (ternary) cubic
    \begin{align*}
           z(x^2+tz), && z(x^2+t^2+z^2), &&  xtz, && xt(x+t), && x^2t, && x^3. 
    \end{align*} 
\end{enumerate}

\section{Case irreducible when $y=0$}

For our purposes, if $f(x,0,z,t)$ is irreducible, then each of its canonical forms in \ref{irreducible} can be written as
\newcommand{\coef}{\Lambda} 
\begin{equation}\label{forma canonica}
x^3+\coef_{8}\,xz^2+\coef_3\,z^3-t^2z+\coef_7\,x^2z,\qquad \text{where }\coef_3,\coef_8\in\mathbb{C}.    
\end{equation}
Now we apply the linear change of variables induced by 
$
\left[
\begin{smallmatrix}
a_{11} & a_{12} & a_{13} \\
a_{21} & a_{22} & a_{23} \\
a_{31} & a_{32} & a_{33} 
\end{smallmatrix}
\right]
$ to the cubic surface (\ref{eq:1}):
\begin{align}
f\left(
(x,y,z,t)\cdot 
\left[
\begin{smallmatrix}
a_{11} & 0 & a_{12} & a_{13} \\
     0 & 1 &  0  & 0 \\
a_{21} & 0 & a_{22} & a_{23} \\
a_{31} & 0 &a_{32} & a_{33} 
\end{smallmatrix}
\right]
\right) & = {\Bigg(}\text{canonical form (\ref{forma canonica}) of }f(x,0,z,t){\Bigg)}+{\Bigg(}\text{monomials that contain }y{\Bigg)} \nonumber  \\
%%%%%%%%%%%%%%%%%%%%%%%
    & ={\Bigg(}x^3+\coef_{8}\,xz^2+\coef_3\,z^3-t^2z+\coef_7\,x^2z{\Bigg)} +{\Bigg(}\coef_2\,y^3+\coef_5\,x^2y+\coef_6\,xy^2
\nonumber  \\
%%%%%%%%%%%%%%%%%%%%%%%%%%%%%%%%%%%%%%%%%%%%%%%%%%%%%%%%%%%%%%%%% 
   & \hspace{9ex}
  +\coef_{11}\,y^2z 
 +\coef_{12}\,yz^2
 +\coef_{13}\,y^2t +\coef_{17}\,xyz+\coef_{18}\,xyt+\coef_{20}\,yzt{\Bigg)}\,.
   \nonumber 
 %%%%%%%%%%%%%%%%%%%%%%%%%%%%%%%%%%%%%%%%%%%%%%%%%%%%%%%%%%%%%%%%% 
\end{align}

Let $B_0,B_1,B_2$ and $B_3$ be $6\times6$ skew-symmetric matrices defined by
$$
B_0=\left[ \begin {smallmatrix}
0&1&0&0&0&-\Lambda_{7}\\ 
-1&0&0&0&0&-\Lambda_{5}\\ 
0&0&0&-1&0&0\\ 
0&0&1&0&0&0\\ 
0&0&0&0&0&-1\\ 
\Lambda_{7}&\Lambda_{5}&0&0&1&0
\end {smallmatrix} \right], 
\hspace{2ex}
B_1=\left[ \begin {smallmatrix} 
0&0&\Lambda_{{11}}&0&-1&\Lambda_{{2}}-\Lambda_{{17}}\\ 
0&0&\Lambda_{{2}}&0&0&-\Lambda_{{6}}\\ 
-\Lambda_{{11}}&-\Lambda_{{2}}&0&0&0&\Lambda_{{12}}\\ 
0&0&0&0&0&1\\ 
1&0&0&0&0&0\\ 
-\Lambda_{{2}}+\Lambda_{{17}}&\Lambda_{{6}}&-\Lambda_{{12}}&-1&0&0
\end {smallmatrix} \right], 
\hspace{2ex}
B_2=\left[ \begin {smallmatrix} 
0&0&0&-1&0&-\Lambda_{{8}}\\ 
0&0&0&0&1&0\\ 
0&0&0&0&0&\Lambda_{{3}}\\ 
1&0&0&0&0&0\\ 
0&-1&0&0&0&0\\ 
\Lambda_{{8}}&0&-\Lambda_{{3}}&0&0&0
\end {smallmatrix} \right]\,, 
$$
$$ 
B_3=\left[ \begin {smallmatrix} 
0&0&\Lambda_{{20}}-d_{{11}}\Lambda_{{6}}&0&0&\Lambda_{{13}}+d_{{11}}\Lambda_{{5}}\\ 
0&0&\Lambda_{{13}}&0&0&-\Lambda_{{18}}-d_{{11}}\\ 
-\Lambda_{{20}}+d_{{11}}\Lambda_{{6}}&-\Lambda_{{13}}&0&0&d_{{11}}&0\\ 
0&0&0&0&0&0\\ 
0&0&-d_{{11}}&0&0&0\\ 
-\Lambda_{{13}}-d_{{11}}\Lambda_{{5}}&\Lambda_{{18}}+d_{{11}}&0&0&0&0
\end {smallmatrix}
\right]\,, 
\quad \text{where } \textstyle d_{11}=\frac{-\Lambda_{18}+\sqrt{\Lambda_{18}^2-4}}{2}\,.
 $$
And let $M_0$ be the matrix 
\begin{equation}\label{pfaffian rep}
M_0{\Big(}x,y,z,t{\Big)}:=xB_0+yB_1+zB_2+tB_3\,.    
\end{equation}
\begin{remark}\label{det M_0 = f^2}
The matrix $M_0$ defined by (\ref{pfaffian rep}) is a Pfaffian representation of 
$f\left(
(x,y,z,t)\cdot 
\left[
\begin{smallmatrix}
a_{11} & 0 & a_{12} & a_{13} \\
     0 & 1 &  0  & 0 \\
a_{21} & 0 & a_{22} & a_{23} \\
a_{31} & 0 &a_{32} & a_{33} 
\end{smallmatrix}
\right]
\right)$.
\end{remark}

\begin{proof}[Proof of the Theorem \ref{main}] Let $f(x,y,z,t)$ be a arbitrary cubic surface defined by (\ref{eq:1}). When $y=0$, we obtain a ternary cubic $f(x,0,z,t)$  as in (\ref{cubica ternaria}).

If $f(x,0,z,t)$ is irreducible, then, by the Remark \ref{det M_0 = f^2},
%If $f(x,0,z,t)$ is irreducible, then, by Remark \ref{det M_0 = f^2}, the cubic surface $f(x,y,z,t)$ is Pfaffian since
$$
\det\,M_0{\Big(}x,y,z,t{\Big)}
=
f\left(
(x,y,z,t)\cdot 
\left[
\begin{smallmatrix}
a_{11} & 0 & a_{12} & a_{13} \\
     0 & 1 &  0  & 0 \\
a_{21} & 0 & a_{22} & a_{23} \\
a_{31} & 0 &a_{32} & a_{33} 
\end{smallmatrix}
\right]
\right)^2\,.
$$
%%%%%%%%%%%%%%%%%%%%%%%%%%%%%%%%%%%%%%%%%%%%%%%%%%%%%%
Therefore,
$$
\det\,M_0\left(
(x,y,z,t)\cdot 
\left[
\begin{smallmatrix}
a_{11} & 0 & a_{12} & a_{13} \\
     0 & 1 &  0  & 0 \\
a_{21} & 0 & a_{22} & a_{23} \\
a_{31} & 0 &a_{32} & a_{33} 
\end{smallmatrix}
\right]^{-1}
\right)
=
f\left(
(x,y,z,t)\cdot 
\left[
\begin{smallmatrix}
a_{11} & 0 & a_{12} & a_{13} \\
     0 & 1 &  0  & 0 \\
a_{21} & 0 & a_{22} & a_{23} \\
a_{31} & 0 &a_{32} & a_{33} 
\end{smallmatrix}
\right]\cdot
\left[
\begin{smallmatrix}
a_{11} & 0 & a_{12} & a_{13} \\
     0 & 1 &  0  & 0 \\
a_{21} & 0 & a_{22} & a_{23} \\
a_{31} & 0 &a_{32} & a_{33} 
\end{smallmatrix}
\right]^{-1}
\right)^2
=
f(x,y,z,t)^2\,.
$$

For the case $f(x,0,z,t)$ reducible, there are at most six sub-cases, one for each canonical form in \ref{reducible}. Each of those sub-cases is trivial.
%The case $f(x,0,z,t)$ reducible is trivial.
\end{proof}

\bibliographystyle{plain}

\def\cprime{$'$}

\end{document}